\newtheorem{thm}{Theorem}[section]
\theoremstyle{definition}
\theoremstyle{remark}
\begin{document}

%\title{Automated Conjecturing VII: \\The Graph Brain Project \\ \& Big Mathematics
%}
\title{(Avoiding) Proof by Contradiction:\\  $\sqrt{2}$ is Not Rational
}

%\author{N. Bushaw, C. E. Larson$^*$, N. Van Cleemput$^{**}$, \\and other workshop participants}
%\footnote{Corresponding author: \url{clarson@vcu.edu}}
%\author{L. Hutchinson, V. Kamat, C. E. Larson\footnote{test}, S. Mehta, D. Muncy, N. Van Cleemput$^*$}
\author{C. E. Larson$^*$}
%math1um, rbarden, cjoshea9, thenealon, PratipRana, nvcleemp,  alf3x,
%shivejl, bigmath, wardbp, neogeo44, wilcoxcookwn (neil wilcox-cook),
%holdenig

%david muncy, brandon harris, corrine, joseph raines

\address{Department of Mathematics and Applied Mathematics\\Virginia Commonwealth University\\Richmond, VA 23284, USA }

%\ead{clarson@vcu.edu}
\thanks{(*) Research supported by the Simons Foundation Mathematics and Physical Sciences--Collaboration Grants for Mathematicians Award (426267)}

%\cortext[cor1]{Corresponding author}

\date{}
% The correct dates will be entered by the editor

\maketitle

\begin{abstract}
We provide an alternative proof that $\sqrt{2}$ is irrational that does not begin with the assumption that $\sqrt{2}$ is in fact rational.
\end{abstract}

While all professional mathematicians use \textit{proof by contradiction} as a tool of the trade many encounter resistance to the introduction of this technique to mathematical initiates. In particular, in the standard classroom proof, a student may wonder: how can it be that the teacher has both assumed that $\sqrt{2}$ is rational and proved that it is not?!?! What the teacher has done may provoke cognitive dissonance---and the student may resist what the professional has become \textit{used to}.

The two most standard proofs by contradiction in the undergraduate curriculum are the proof that there are infinitely many primes and the proof that $\sqrt{2}$ is irrational (see, for example \cite{Hamm13}).
It is well-known that the traditional proof by contradiction for the infinitude of primes can easily be converted into a proof for the construction of an unending sequence of primes: there is no need to ever make the assumption that there are finitely many primes---and no need to ever produce a contradiction. Here we provide an analogous proof that the square root of two is irrational. We don't begin by assuming that that the $\sqrt{2}$ is rational: rather we show that, beginning with \text{any} rational approximation of the $\sqrt{2}$, we can produce an unending sequence of better approximations.

Before beginning we remark that, just as the reworked proof of the infinitude of primes contains the main idea of the traditional proof by contradiction,  the main idea here (parity) is repurposed from the traditional proof. It is also worth mentioning that the production of proofs that that $\sqrt{2}$ is irrational has continued to the present day (\cite{Half55,GaunRabs56,Fine76,Floy89,Bloo95,Beig91,KalmMenaShah97,Apos00,Unga06,Berr08,Ferr09,Cair12,MoreGarc13,Roun19}), that interesting discussions exist,  and that philosophical issues surrounding proof by contradiction by intuitionists and constructivists have also generated heat for at least 100 years (\cite{Bish85}). 

\begin{thm}
If $a$ and $b$ are positive integers with $\frac{a}{b}\geq \sqrt{2}$ (that is, $a^2\geq 2b^2$) then there are positive integers $a'',b''$ such that:
\[
\frac{a}{b}>\frac{a''}{b''}\geq \sqrt{2}.
\]
\end{thm}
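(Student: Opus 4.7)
The plan is to construct $a''$ and $b''$ via the Newton-style formula $a'' := a^2 + 2b^2$, $b'' := 2ab$, after first using a parity argument to upgrade the hypothesis $a^2 \geq 2b^2$ to the strict inequality $a^2 > 2b^2$.

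The preparatory step is to ensure that $a$ and $b$ are not both even. If they are, then $a/b = (a/2)/(b/2)$ and $(a/2)^2 \geq 2(b/2)^2$, so one may replace $(a,b)$ by $(a/2,b/2)$; iterating this finitely often produces a representative of the same fraction $a/b$ in which at least one of the two integers is odd. This is the repurposing of the classical parity idea that the author promises. For such a pair, $a^2 = 2b^2$ is impossible: if $a$ is odd, $a^2$ is odd while $2b^2$ is even; and if $a$ is even but $b$ is odd, $a^2 \equiv 0 \pmod 4$ while $2b^2 \equiv 2 \pmod 4$. The given hypothesis therefore forces $a^2 > 2b^2$.

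The closing step is to check that $a''$ and $b''$ as defined above satisfy the two required inequalities. A short expansion gives $(a'')^2 - 2(b'')^2 = (a^2 - 2b^2)^2 \geq 0$, so $a''/b'' \geq \sqrt{2}$; and $a/b - a''/b'' = (a^2 - 2b^2)/(2ab)$, which is strictly positive by the previous paragraph, so $a/b > a''/b''$.

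The main obstacle I anticipate is precisely the equality case $a^2 = 2b^2$: if one naively applied the Newton formula there, it would return $a''/b'' = a/b$ rather than something strictly smaller, and indeed no strictly smaller rational at least $\sqrt{2}$ could exist in that case. The parity reduction in the preparatory step is exactly what rules this obstruction out, and is what makes parity do the same work here that it does in the classical proof by contradiction.
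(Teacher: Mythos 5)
Your proposal is correct and follows essentially the same route as the paper: reduce to a pair that is not both even, use parity to upgrade $a^2\geq 2b^2$ to a strict inequality, and then apply the map $(a,b)\mapsto(a^2+2b^2,\,2ab)$, verifying $(a'')^2-2(b'')^2=(a^2-2b^2)^2\geq 0$ and the strict decrease of the fraction. The only cosmetic difference is that you phrase the even-$a$, odd-$b$ case modulo $4$ where the paper divides through by $2$ and compares parities; these are the same argument.
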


\begin{proof}
Let $a$ and $b$ be positive integers with $a^2\geq 2b^2$. If $a$ and $b$ are both even let $2^k$ be the largest power of $2$ dividing each. Let $a'=\frac{a}{2^k}$ and $b'=\frac{b}{2^k}$; then $a'$ and $b'$ are not both even, $(a'2^k)^2\geq 2(b'2^k)^2$ and 
$a'^2\geq 2b'^2$. If $a'$ is odd, then $a'^2$ 
is odd and it must be that $a'^2>2b'^2$. If $a'$ is even (so $a'=2c$ for some integer $c$) and $b'$ is odd (so $b'^2$ is odd), then $(2c)^2=a'^2\geq 2b'^2$. As $2c^2\geq b'^2$, and $2c^2$ is even and $b'^2$ is odd, we have $2c^2>b'^2$, and then $a'^2>2b'^2$. So in either case  $a'^2>2b'^2$. 

% Since $a'$ and $b'$ are not both even it cannot be the case that $a'^2= 2b'^2$: if $a'$ is odd then $a'^2$ is odd, and if $a'=2c So $a'^2> 2b'^2$.

Now let  $a''=a'^2+2b'^2$ and $b''=2a'b'$. Then:
\[
a'^2>2b'^2,
\]
\[
2a'^2>a'^2+2b'^2,
\]
\[
a'(2a'b')>(a'^2+2b'^2)b',
\]
\[
a'b''>a''b',
\]
\[
\frac{a'}{b'}>\frac{a''}{b''}.
\]
\[
\text{And}, \text{ as } \frac{a}{b}=\frac{a'}{b'}, \frac{a}{b}>\frac{a''}{b''}.
\]

Also:
\[
(a'^2-2b'^2)^2\geq 0,
\]
\[
a'^4-4a'^2b'^2+4b'^4\geq 0
\]
\[
a'^4+4a'^2b'^2+4b'^4\geq 8a'^2b'^2,
\]
\[
(a'^2+2b'^2)^2\geq 2(2a'b')^2,
\]
\[
a''^2\geq 2b''^2,
\]
\[
\frac{a''}{b''}\geq \sqrt{2}.
\]

\end{proof}

So we have an algorithm (a variation of a possible Babylonian algorithm \cite{FowlRobs98}) for producing better and better approximations to $\sqrt{2}$ given \textit{any} rational number $\frac{a}{b}$ with $\frac{a}{b}\geq \sqrt{2}$. And it can't then be the case that $\sqrt{2}$ is a rational number.\\

\textbf{Acknowledgement.} A great many of the citations here originate from A. Bogomolny's collection of proofs at the Cut the Knot website (\url{http://www.cut-the-knot.org/proofs/sq_root.shtml}).

% Non-BibTeX users please use
\bibliographystyle{plain}
\bibliography{../../larson.bib}

\def\cprime{$'$} \def\cprime{$'$} \def\cprime{$'$} \def\cprime{$'$}
\begin{thebibliography}{10}

\bibitem{Apos00}
T.~M. Apostol.
\newblock Irrationality of the square root of two-a geometric proof.
\newblock {\em American Mathematical Monthly}, 107(9):841--841, 2000.

\bibitem{Beig91}
R.~Beigel.
\newblock Irrationality without number theory.
\newblock {\em The American Mathematical Monthly}, 98(4):332--335, 1991.

\bibitem{Berr08}
G.~Berresford.
\newblock A simpler proof of a well-known fact.
\newblock {\em Am. Math. Mon}, 115:524, 2008.

\bibitem{Bish85}
E.~Bishop.
\newblock Schizophrenia in contemporary mathematics.
\newblock In {\em Errett Bishop: Reflections on Him and His Research},
  volume~39 of {\em Contemporary Mathematics}, pages 1--32. American
  Mathematical Society, 1985.

\bibitem{Bloo95}
D.~M. Bloom.
\newblock A one-sentence proof that $\sqrt{2}$ is irrational.
\newblock {\em Mathematics Magazine}, 68(4):286--286, 1995.

\bibitem{Cair12}
G.~Cairns.
\newblock Proof without words: $\sqrt{2}$ is irrational.
\newblock {\em Mathematics Magazine}, 85(2):123, 2012.

\bibitem{Ferr09}
N.~C. Ferre{\~n}o.
\newblock Yet another proof of the irrationality of $\sqrt{2}$.
\newblock {\em The American Mathematical Monthly}, 116(1):68--69, 2009.

\bibitem{Fine76}
N.~Fine.
\newblock Look, ma, no primes.
\newblock {\em Mathematics Magazine}, 49(5):249--250, 1976.

\bibitem{Floy89}
R.~Floyd.
\newblock What else pythagoras could have done.
\newblock {\em American Mathematical Monthly}, 96(1):67--67, 1989.

\bibitem{FowlRobs98}
D.~Fowler and E.~Robson.
\newblock Square root approximations in old babylonian mathematics: Ybc 7289 in
  context.
\newblock {\em Historia Mathematica}, 25(4):366--378, 1998.

\bibitem{GaunRabs56}
R.~Gauntt and R.~Rabson.
\newblock The irrationality of $\sqrt{2}$.
\newblock {\em The American Mathematical Monthly}, 63(4):247--247, 1956.

\bibitem{Half55}
E.~Halfar.
\newblock The irrationality of $\sqrt{2}$.
\newblock {\em The American Mathematical Monthly}, 62(6):437--437, 1955.

\bibitem{Hamm13}
R.~Hammack.
\newblock {\em The Book of Proof, 3rd ed.}
\newblock Richard Hammack, 2013.

\bibitem{KalmMenaShah97}
D.~Kalman, R.~Mena, and S.~Shahriari.
\newblock Variations on an irrational theme—geometry, dynamics, algebra.
\newblock {\em Mathematics Magazine}, 70(2):93--104, 1997.

\bibitem{MoreGarc13}
S.~G. Moreno and E.~M. Garc{\'\i}a-Caballero.
\newblock On the irrationality of $\sqrt{2}$ once again.
\newblock {\em The American Mathematical Monthly}, 120(7):674--674, 2013.

\bibitem{Roun19}
B.~Rounds.
\newblock Euclid's lemma and the square root of 2.
\newblock {\em The American Mathematical Monthly}, 126(3):274--274, 2019.

\bibitem{Unga06}
P.~Ungar.
\newblock Irrationality of square roots.
\newblock {\em Mathematics Magazine}, 79(2):147, 2006.

\end{thebibliography}
%
%\begin{thebibliography}{}
%%
%% and use \bibitem to create references. Consult the Instructions
%% for authors for reference list style.
%%
%\bibitem{RefJ}
%% Format for Journal Reference
%Author, Article title, Journal, Volume, page numbers (year)
%% Format for books
%\bibitem{RefB}
%Author, Book title, page numbers. Publisher, place (year)
%% etc
%\end{thebibliography}

\end{document}